\newtheorem{theorem}{Theorem}
\newtheorem*{theorem*}{Theorem}
\newtheorem*{lemma*}{Lemma}
\newtheorem*{proposition}{Proposition}
\theoremstyle{definition}
\theoremstyle{remark}
\def\Let@{\def\\{\notag\math@cr}}
\begin{document}
\title[]{Transport and Interface: an uncertainty\\ principle for the Wasserstein distance}
\keywords{Wasserstein, Uncertainty principle, Nodal set, Sturm-Liouville theory.}
\subjclass[2010]{28A75, 58C40, 90B06} 

\author[]{Amir Sagiv}
\address{Department of Applied Mathematics, Tel Aviv University, Tel Aviv 6997801, Israel}
\email{amirsagiv@mail.tau.ac.il}

\author[]{Stefan Steinerberger}
\address{Department of Mathematics, Yale University, New Haven, USA}
\email{stefan.steinerberger@yale.edu}
\thanks{S.S. is supported by the NSF (DMS-1763179) and the Alfred P. Sloan Foundation. This research was carried out during a long-term stay of A.S.  at Yale University whose hospitality is gratefully acknowledged.}

\begin{abstract} Let $f: [0,1]^d \rightarrow \mathbb{R}$ be a continuous function with zero mean and interpret $f_{+} = \max(f, 0)$ and $f_{-} = -\min(f, 0)$ as the densities of two measures. We prove that if the cost of transport from $f_{+}$ to $f_{-}$ is small (in terms of the Wasserstein distance $W^1$), then the  nodal set $\left\{x \in (0,1)^d: f(x) = 0 \right\}$ has to be large (`if it is always easy to buy milk, there must be many supermarkets'). More precisely, we show that
$$ W_1(f_+, f_-) \cdot \mathcal{H}^{d-1}\left\{x \in (0,1)^d: f(x) = 0 \right\} \gtrsim_{d}  \left( \frac{\|f\|_{L^1}}{\|f\|_{L^{\infty}}} \right)^{4 - \frac1d} \|f\|_{L^1} \, .$$
We apply this ``uncertainty principle" to the metric Sturm-Liouville theory in higher dimensions to show that a linear combination of eigenfunctions of an elliptic operator cannot have an arbitrarily small zero set.
\end{abstract}

\maketitle

\section{Introduction and Results}
\subsection{Introduction.}  This paper is concerned with a basic question in measure theory: let $\Omega = [0,1]^d$ and let $f:\Omega \rightarrow \mathbb{R}$ be a continuous function with zero mean. This induces two absolutely continuous measures
$$ \mu =  \max(f, 0)\, dx \qquad \mbox{and} \qquad \nu = -\min(f, 0)\, dx \, .$$
We may think of $f_+$ (or $\mu$) as the `surplus' and of $f_{-}$ (or $\nu$) as the `deficit'. How expensive is it to transport the surplus to the deficit, and how is this transport related to the geometry of the function $f$? 

\begin{center}
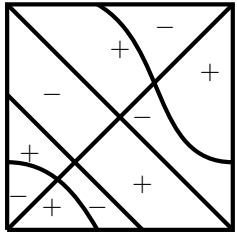
\begin{figure}[h!]
\begin{tikzpicture}[scale=3]
\draw [ultra thick] (0,0) -- (1,0) -- (1,1) -- (0,1) -- (0,0);
\draw [ultra thick] (0, 0.3) to[out=0, in=120] (0.4, 0);
\draw [ultra thick] (0.4, 1) to[out=330, in=180] (1, 0.3);
\draw [ultra thick] (0,1) -- (1,0);
\draw [ultra thick] (0,0.6) -- (0.6,0);
\draw [ultra thick] (0,0) -- (1,1);
\node at (0.2, 0.1) {$+$};
\node at (0.1, 0.34) {$+$};
\node at (0.05, 0.15) {$-$};
\node at (0.6, 0.2) {$+$};
\node at (0.9, 0.7) {$+$};
\node at (0.7, 0.9) {$-$};
\node at (0.2, 0.6) {$-$};
\node at (0.5, 0.8) {$+$};
\node at (0.6, 0.5) {$-$};
\node at (0.4, 0.1) {$-$};
\end{tikzpicture}
\caption{The sign pattern of a function $f:[0,1]^2 \rightarrow \mathbb{R}$, defining the measures $\mu$ and $\nu$ on the supports of $f_{\pm}$, respectively.  The boundary lines are the nodal sets $\left\{x: f(x) = 0\right\}$ .}
\end{figure}
\end{center}

For this question to be meaningful, we use the Wasserstein distance as a notion of 'cost of transport' between measures.
The Wasserstein distance was introduced in the late 1960's \cite{dob, wasser} and is now a fundamental notion in the theory of Optimal Transport \cite{villani}. The $p-$Wasserstein distance between two measures $\mu$ and $\nu$ on a domain $\Omega$ is defined via
$$ W_p(\mu, \nu) = \left( \inf_{\gamma \in \Gamma(\mu, \nu)} \int_{\Omega \times \Omega}{ |x-y|^p \, d \gamma(x,y)}\right)^{1/p},$$
where $| \cdot |$ is the distance and $\Gamma(\mu, \nu)$ denotes all measures on $\Omega \times \Omega$
with marginals $\mu$ and $\nu$ (couplings of $\mu$ and $\nu$). We will mainly work with $p=1$, also known as Earth Mover's Distance, which has the most immediate physical interpretation as $\mbox{cost} = \mbox{mass} \times \mbox{distance}$.
Our paper is motivated by a simple intuition.
\begin{quote}
If it is cheap to transport $\mu$ to $\nu$, then most of the positive mass must be somewhat close to most of the negative mass. Hence, the supports of $\mu$ and $\nu$ alternate frequently and so the nodal set $\left\{x \in \Omega: f(x) = 0 \right\}$ should be large.
\end{quote}
This notion was first formulated by the second author in \cite{stein2}, where a two-dimensional version of such a statement was proven and applied for a problem in Sturm-Liouville theory (this is outlined in \S 1.3). The purpose of this paper is to show that such results also exist in higher dimensions.

\subsection{Main Results.} We now state our main result.

\begin{theorem}  Let $f:[0,1]^d \rightarrow \mathbb{R}$ be a continuous function with mean value 0. Then, with an implicit constant depending only on $d$,
$$ W_1(f_+, f_-) \cdot \mathcal{H}^{d-1}\left\{x \in (0,1)^d: f(x) = 0 \right\} \gtrsim_{d}  \left( \frac{\|f\|_{L^1}}{\|f\|_{L^{\infty}}} \right)^{4 - \frac1d} \|f\|_{L^1} \, ,$$
where $\mathcal{H}^{d-1}$ is the $(d-1)$-dimensional Hausdorff measure.
\end{theorem}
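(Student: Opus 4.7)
The plan is to begin with the Kantorovich--Rubinstein dual
\[
W_1(f_+,f_-) \;=\; \sup_{\operatorname{Lip}(\phi)\le 1}\int_{[0,1]^d}\phi(x)\,f(x)\,dx,
\]
and to test against the signed distance to the nodal set, $\phi(x)=\operatorname{sign}(f(x))\,d(x,Z)$, where $Z=\{f=0\}$ and $\phi\equiv 0$ on $Z$. I would first check that $\phi$ is $1$-Lipschitz: if $f(x)>0>f(y)$, the intermediate value theorem places a point $z\in Z$ on the segment $[x,y]$, so $d(x,Z)+d(y,Z)\le|x-z|+|z-y|=|x-y|$; the other cases are trivial. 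Duality then delivers
\[
W_1(f_+,f_-) \;\ge\; \int_{[0,1]^d} d(x,Z)\,|f(x)|\,dx.
\]

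A Chebyshev step on the distance function converts this into
\[
W_1(f_+,f_-) \;\ge\; r\,\bigl(\|f\|_{L^1}-\|f\|_{L^\infty}\,|N_r(Z)|\bigr)
\]
for every $r>0$, with $N_r(Z)=\{x:\,d(x,Z)<r\}$. The goal is then to choose $r$ so that $|N_r(Z)|\le a/2$, where $a=\|f\|_{L^1}/\|f\|_{L^\infty}$, after which $W_1(f_+,f_-)\ge \tfrac12\,r\,\|f\|_{L^1}$; multiplying by $T=\mathcal H^{d-1}(Z)$ is meant to produce the claim.

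The heart of the matter, and what I expect to be the main obstacle, is converting the Hausdorff information $\mathcal H^{d-1}(Z)=T$ into a usable upper bound on $|N_r(Z)|$. A Minkowski-content inequality $|N_r(Z)|\lesssim_d Tr$ would yield the \emph{stronger} conclusion $W_1\cdot T\gtrsim a\,\|f\|_{L^1}$, but Hausdorff measure does not control Minkowski content in general: almost-optimal Hausdorff coverings may contain balls of arbitrarily small radius whose $r$-inflations sum to far more than $Tr$. My plan is to fix a near-optimal covering $Z\subset\bigcup_i B(x_i,r_i)$ with $\sum_i r_i^{d-1}\lesssim T$, partition it by scale into $\{r_i\ge r\}$ and $\{r_i<r\}$, and estimate each contribution to $|N_r(Z)|$ separately. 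The large-scale piece is controlled by a dyadic regrouping combined with the bounded diameter of $[0,1]^d$; the small-scale piece is controlled by combining the per-ball cost $\le c_d r^d$ with the ambient bound $|N_r(Z)|\le 1$ and, if required, with the relative isoperimetric inequality applied to $Z$ viewed as a separator of $\{f>0\}$ and $\{f<0\}$ (continuity and the mean-zero condition force $Z$ to separate these regions topologically). Interpolating produces a sub-linear bound on $|N_r(Z)|$ that is weaker than the Minkowski estimate by a factor involving $a$, and this interpolation loss is precisely what produces the exponent $4-\tfrac1d$ (rather than $1$) in the theorem.

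Optimising $r$ against the resulting neighbourhood estimate gives a scale $r\sim_d a^{3-1/d}/T$; substituting back into the Chebyshev bound and multiplying by $T$ then yields the inequality. The only non-routine ingredient is the geometric-measure-theoretic step of converting $\mathcal H^{d-1}$ into a neighbourhood estimate; all other steps are Kantorovich duality and Chebyshev.
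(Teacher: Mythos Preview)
Your opening moves are sound: the signed-distance test function $\phi(x)=\operatorname{sign}(f(x))\,d(x,Z)$ is indeed $1$-Lipschitz (the intermediate-value argument is correct), and Kantorovich--Rubinstein duality followed by Chebyshev gives
\[
W_1(f_+,f_-)\ \ge\ r\bigl(\|f\|_{L^1}-\|f\|_{L^\infty}\,|N_r(Z)|\bigr),\qquad Z=\{f=0\}.
\]
The failure is exactly where you anticipated it, and it is fatal: there is \emph{no} upper bound on $|N_r(Z)|$ in terms of $T=\mathcal H^{d-1}(Z)$ and $a=\|f\|_{L^1}/\|f\|_{L^\infty}$ of the kind your scheme requires, and the covering/interpolation outline cannot manufacture one.

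Here is a concrete obstruction in $d=2$. Start from $f_0(x,y)=x-\tfrac12$, so $T=1$, $a=\tfrac12$, $W_1\sim 1$. Choose $N$ well-separated points $p_1,\dots,p_N$ in $\{x>\tfrac34\}$ and their mirror images $q_i$ in $\{x<\tfrac14\}$, and subtract tiny symmetric bumps so that the modified $f$ touches zero at each $p_i,q_i$ without changing sign. Then $Z=\{x=\tfrac12\}\cup\{p_i\}\cup\{q_i\}$ and still $T=\mathcal H^1(Z)=1$, while $a$ and $W_1$ change only infinitesimally. But now $|N_r(Z)|\gtrsim Nr^2$, so enforcing $|N_r(Z)|\le a/2$ forces $r\lesssim N^{-1/2}$, and your inequality yields only $W_1\cdot T\gtrsim N^{-1/2}\to 0$. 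Your covering argument breaks precisely here: the $2N$ isolated zeros are covered by $2N$ balls of radius $0$, contributing nothing to $\sum r_i^{d-1}$ but dominating the small-scale part of $|N_r(Z)|$; the \emph{number} of small-scale balls is not controlled by $T$. The relative isoperimetric inequality cannot rescue this step, since it bounds $T$ from \emph{below}, not $|N_r(Z)|$ from above.

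The paper's proof sidesteps the issue entirely by never estimating $|N_r(Z)|$. It partitions $[0,1]^d$ into $\varepsilon$-cubes, discards those with negligible $f_+$-mass, and splits the remainder into \emph{balanced} cubes (comparable $f_+$ and $f_-$ mass) and \emph{unbalanced} cubes (mostly $f_+$). On each balanced cube the relative isoperimetric inequality gives $\mathcal H^{d-1}(Z\cap Q)\gtrsim \varepsilon^{d-1}a^{(d-1)/d}$, so the \emph{number} of balanced cubes is bounded above by $T$. On each unbalanced cube a definite fraction of the mass must leave the cube, contributing $\gtrsim \varepsilon^{d+1}\|f\|_{L^1}^2/\|f\|_{L^\infty}$ to $W_1$. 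Choosing $\varepsilon\sim a^{2-1/d}/T$ makes unbalanced cubes dominate, and summing their transport cost gives the theorem. The conceptual difference is that isoperimetry is used to \emph{lower}-bound the nodal set locally (hence to cap the number of balanced cubes), not to upper-bound a tube; isolated touching zeros are then harmless because they never create balanced cubes.
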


This is the first result of its type in higher dimensions. It is known~\cite{stein2} that for $d=2$ the exponent $4- 1/d = 3.5$ can be replaced by $1$, but that argument is very different since it exploits a property of two dimensions that cannot be generalized to higher dimensions. We also note that Theorem 1 can easily be extended to the $W_p$ distance for $p>1$, see~\S \ref{sec:wp}. A version of Theorem 1 should hold on more general bounded domains. However, since $W_1(\mu ,\nu)\leq {\rm diam} (\Omega) \cdot \|f\|f_1$, it is necessary that the domain is one in which a relative isoperimetric inequality is valid, see discussion in \S 5.4. We expect a form of the statement to hold on convex domains.
While we do not know what the sharp form of the Theorem in $(0,1)^d$ could be, we show that the exponent $4-1/d$ cannot be replaced by anything smaller than $1-1/d$.
\begin{proposition} An estimate of the form
 $$ W_1(f_+, f_-) \cdot \mathcal{H}^{d-1}\left\{x \in (0,1)^d: f(x) = 0 \right\} \gtrsim_{\Omega}  \left( \frac{\|f\|_{L^1}}{\|f\|_{L^{\infty}}} \right)^{\alpha} \|f\|_{L^1}$$
 can only hold for $\alpha \geq (d-1)/d$.
 \end{proposition}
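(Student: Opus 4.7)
The plan is to exhibit, for each $\alpha<(d-1)/d$, a sequence of continuous mean-zero functions $f_\epsilon\colon [0,1]^d\to\mathbb{R}$ for which the proposed inequality fails as $\epsilon\to 0$. The construction I have in mind in fact yields a counterexample for every $\alpha<1$, which is strictly stronger than needed.

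The construction is built from two adjacent thin slabs. Put $A_\epsilon := [1-2\epsilon,\,1-\epsilon]\times[0,1]^{d-1}$ and $B_\epsilon := [1-\epsilon,\,1]\times[0,1]^{d-1}$, and let $f_\epsilon$ be a smooth function that equals $+1$ on the interior of $A_\epsilon$ and $-1$ on the interior of $B_\epsilon$, equals a fixed mean-zero background $h_\epsilon(x) = c_\epsilon \sin(2\pi x_1)$ with $c_\epsilon\ll \epsilon^2$ on the complement $[0,1]^d\setminus (A_\epsilon\cup B_\epsilon)$, and is glued continuously through thin transition collars of width $o(\epsilon)$. The background perturbation $h_\epsilon$ is essential: without a nonvanishing function outside the slab, $\{f_\epsilon=0\}$ would contain a set of positive $d$-volume, its $\mathcal{H}^{d-1}$-measure would be infinite, and the inequality would become vacuous.

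The four relevant scalings are routine. First, $\|f_\epsilon\|_{L^\infty}\asymp 1$ and $\|f_\epsilon\|_{L^1}\asymp 2\epsilon$, so $\|f_\epsilon\|_{L^1}/\|f_\epsilon\|_{L^\infty}\asymp \epsilon$. Second, the transport plan that slides each horizontal fiber of $A_\epsilon$ onto its counterpart in $B_\epsilon$ by $x_1\mapsto x_1+\epsilon$ has cost $\epsilon\cdot|A_\epsilon|\asymp \epsilon^2$, with only a negligible $O(c_\epsilon)$ correction from pairing $h_{\epsilon,+}$ and $h_{\epsilon,-}$, giving $W_1(f_{\epsilon,+},f_{\epsilon,-})\lesssim \epsilon^2$. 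Third, the nodal set decomposes into three pieces of $(d-1)$-Hausdorff measure $O(1)$: the interior interface at $x_1\approx 1-\epsilon$ where $+1$ meets $-1$; the outer transition sheet at $x_1\approx 1-2\epsilon$ where $f_\epsilon$ passes from the slightly negative value $h_\epsilon(1-2\epsilon)\approx -4\pi c_\epsilon\epsilon$ up to $+1$; and the zero plane $\{x_1=1/2\}$ of $h_\epsilon$. Hence $\mathcal{H}^{d-1}\{f_\epsilon=0\}=O(1)$. Substituting,
$$
\frac{W_1(f_{\epsilon,+},f_{\epsilon,-})\cdot \mathcal{H}^{d-1}\{f_\epsilon=0\}}{\|f_\epsilon\|_{L^1}\,\bigl(\|f_\epsilon\|_{L^1}/\|f_\epsilon\|_{L^\infty}\bigr)^{\alpha}} \;\asymp\; \frac{\epsilon^2}{\epsilon\cdot \epsilon^\alpha} \;=\; \epsilon^{\,1-\alpha},
$$
which tends to $0$ as $\epsilon\to 0$ for any $\alpha<1$, and so in particular for any $\alpha<(d-1)/d$.

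The main technical step is controlling the mollification on the outer boundary of the slab. The transition collars must be narrow enough to contribute $o(\epsilon^2)$ to $W_1$, yet each face should produce a single connected nodal sheet of $(d-1)$-area $O(1)$ rather than a thick zero region; both properties are ensured by using a strictly monotone one-dimensional transition profile of width $o(\epsilon)$ and invoking the coarea formula to count the level sets in the collar. Once this is in place, the scaling computation above closes the argument.
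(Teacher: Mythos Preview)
Your construction is correct and in fact yields the stronger constraint $\alpha \geq 1$, strictly improving on the paper's Proposition. The paper's example is quite different: it places $n$ well-separated bumps of height $\sim \varepsilon^{-d}/n$ on a constant background, so that $\|f\|_{L^1}\sim 1$, $\|f\|_{L^\infty}\sim \varepsilon^{-d}/n$, the nodal set is $n$ small spheres of total area $\sim n\varepsilon^{d-1}$, and $W_1\sim n^{-1/d}$; this gives $W_1\cdot\mathcal{H}^{d-1}\sim (n\varepsilon^d)^{(d-1)/d}$ against a ratio $\|f\|_{L^1}/\|f\|_{L^\infty}\sim n\varepsilon^d$, forcing only $\alpha\geq (d-1)/d$. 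Your two-slab construction instead keeps $\|f\|_{L^\infty}\sim 1$ and sends $\|f\|_{L^1}\to 0$, with the crucial feature that the transport \emph{distance} scales like $\epsilon$ (adjacent slabs) rather than staying order $1$ (separated bumps). This buys you the extra factor and lands exactly on the exponent $1$; since the two-dimensional result cited in the paper shows the inequality \emph{does} hold at $\alpha=1$, your example is even sharp for $d=2$.

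One small housekeeping point: as written, $f_\epsilon$ need not have mean exactly zero. The left collar near $x_1=1-2\epsilon$ transitions from $O(c_\epsilon\epsilon)$ up to $+1$ and therefore contributes $\sim\tfrac{1}{2}\cdot(\text{collar width})$ to $\int f_\epsilon$, which is not obviously canceled elsewhere. A painless fix is to center the slabs at $x_1=1/2$ and build the whole function odd under $x_1\mapsto 1-x_1$; mean zero is then automatic, and as a bonus the outer collars no longer cross zero and the background zero $\{x_1=1/2\}$ coincides with the slab interface, so the nodal set is a single hyperplane.
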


The construction in the Proposition easily generalizes to other domains. Besides the general elementary proof, we also give a spectral proof for the sphere $\Omega = \mathbb{S}^{d}$, adapted from \cite{stein2}, that makes use of the theory of spherical $t-$designs \cite{bond}. The advantage of such a purely spectral argument is that it might be used to prove possible discrete versions of the problem, which we discuss in \S 5.

 \subsection{Applications to Sturm-Liouville Theory.} 
Sturm-Liouville theory \cite{liouville, sturm, sturm2} shows that the eigenfunctions of an elliptic operator on the real line are rather rigid. More precisely, consider an operator of the type
$$ H = -\frac{d}{dx}\left(a(x) \frac{d}{dx} \right) + b(x) \qquad \mbox{on an interval}~(a,b)$$
with Neumann or Dirichlet boundary conditions, where $a(x), b(x) > 0$ are bounded away from 0, then there is a discrete sequence of eigenfunctions $\phi_k$ that form a basis of $L^2(a,b)$ where the eigenfunction $\phi_k$ has $k-1$ roots. An even stronger statement is that
$$ \sum_{k \geq n}{a_k \phi_k} \qquad \mbox{has at least}~n-1~\mbox{roots}.$$
Phrased differently, adding functions that oscillate at higher frequencies cannot decrease the number of roots. This result is often attributed to Hurwitz \cite{hurwitz} but is already contained in Sturm's original work \cite{sturm, sturm2}, see B\'{e}rard \& Helffer \cite{berard, berard2} for a fascinating historical overview and a modern proof, and see \cite{stein3} for a recent quantitative result. One could now wonder whether such results are also possible in higher dimensions: given a compact domain, we can consider eigenfunctions of the Laplacian~$\phi_k$ (or more general elliptic operator with Neumann boundary condition) and ask whether the linear combination of the eigenfunctions $\phi _k$ can vanish only on a small set.

\begin{theorem}[\cite{stein1}]
Let $\Omega$ be a compact domain in $\mathbb{R}^d$ or a compact $d-$dimensional manifold, let $(\phi_k)$ denote the eigenvalues of the Laplacian, $-\Delta \phi_k = \lambda_k \phi_k$, with Neumann boundary conditions and let $f:M \rightarrow \mathbb{R}$ denote a continuous function whose spectral decomposition is given by
 $$ f = \sum_{\lambda_k \geq \lambda}{ \left\langle f, \phi_k \right\rangle \phi_k}\, ,$$
then
$$ \mathcal{H}^{d-1} \left\{x:f(x) =0\right\} \gtrsim_{\Omega} \frac{  \sqrt{\lambda}}{(\log{\lambda})^{d/2}} \left( \frac{ \|f\|_{L^1}}{\|f\|_{L^{\infty}}} \right)^{2 - \frac{1}{d}}\, .$$
\end{theorem}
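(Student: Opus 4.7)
The plan is to combine an upper bound on $W_1(f_+,f_-)$ coming from the heat semigroup (exploiting the spectral localization of $f$) with a Wasserstein--uncertainty lower bound relating $W_1$ and $\mathcal H^{d-1}\{f=0\}$. Dividing the two yields the claimed lower bound on $\mathcal H^{d-1}\{f=0\}$.

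For the upper bound, let $P_t=e^{-t\Delta}$ be the Neumann heat semigroup on $\Omega$. Since $f$ is supported on eigenfunctions with $\lambda_k\ge\lambda$, the $L^2$-contraction $\|P_tf\|_{L^2}\le e^{-\lambda t}\|f\|_{L^2}$ combined with ultracontractivity $\|P_{t/2}\|_{L^2\to L^\infty}\lesssim t^{-d/4}$ yields $\|P_tf\|_{L^\infty}\lesssim t^{-d/4}e^{-\lambda t/2}\|f\|_{L^2}$. Applying the triangle inequality
$$W_1(f_+,f_-)\leq W_1(f_+,P_tf_+)+W_1(P_tf_+,P_tf_-)+W_1(P_tf_-,f_-),$$
bounding the outer terms by $\sqrt t\,\|f\|_{L^1}$ (since the heat kernel moves mass only at scale $\sqrt t$) and the middle one by $\mathrm{diam}(\Omega)\,\|P_tf\|_{L^1}$ (since $P_tf_\pm$ have equal total mass with pointwise difference $P_tf$), then optimizing $t\asymp\log\lambda/\lambda$ and using $\|f\|_{L^2}\lesssim\|f\|_{L^\infty}$ on the bounded domain, yields
$$W_1(f_+,f_-)\lesssim\frac{(\log\lambda)^{d/2}}{\sqrt\lambda}\,\|f\|_{L^\infty}.$$

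For the lower bound I would use the sharp form of the Wasserstein--uncertainty inequality,
$$W_1(f_+,f_-)\cdot\mathcal H^{d-1}\{f=0\}\gtrsim R^{1-1/d}\,\|f\|_{L^1},$$
with $R=\|f\|_{L^1}/\|f\|_{L^\infty}$, which saturates the Proposition's lower limit and is substantially stronger in the $R$-exponent than the general-purpose Theorem~1. It is obtained by combining the relative isoperimetric inequality on $\Omega$ (the positive region $\{f>0\}$ has volume $\ge R/(2|\Omega|)$, whose boundary, contained in the nodal set, has $(d-1)$-measure $\gtrsim R^{(d-1)/d}$) with a careful transport-cost analysis around the nodal barrier. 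Dividing this by the heat-semigroup upper bound gives $\mathcal H^{d-1}\{f=0\}\gtrsim R^{2-1/d}\sqrt\lambda/(\log\lambda)^{d/2}$.

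The hard part is establishing the sharp $R^{1-1/d}$ exponent in the Wasserstein lower bound: Theorem~1 yields only $R^{4-1/d}$, which would produce the weaker $\mathcal H^{d-1}\gtrsim R^{5-1/d}\sqrt\lambda/(\log\lambda)^{d/2}$, while a naive Kantorovich--duality argument (choose $\phi(x)=\mathrm{dist}(x,\{f<0\})$ and use the tube estimate $|\{\mathrm{dist}(\cdot,N)<r\}|\lesssim r\,\mathcal H^{d-1}\{f=0\}$) gives only $R^1$; closing the gap requires the refined geometric analysis of \cite{stein1}, which simultaneously exploits the isoperimetric structure of $\Omega$ and the oscillatory nature of $f$, interpolating between ``concentrated'' and ``balanced'' regimes of the positive region. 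A secondary subtlety is the $(\log\lambda)^{d/2}$ factor in the upper bound, an unavoidable artifact of the ultracontractivity step that is reflected in the denominator of the final statement.
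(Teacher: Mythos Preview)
The paper does not prove this statement; it is quoted from \cite{stein1}, and the paper explicitly notes that the proof there ``makes explicit use of the heat flow and combinatorial arguments.'' The Wasserstein route you describe---an upper bound on $W_1(f_+,f_-)$ from the heat semigroup combined with a Wasserstein--nodal uncertainty inequality---is precisely the mechanism the paper uses to derive the \emph{weaker} Theorem~3 (exponent $4-\tfrac1d$, factor $\log\lambda$), not Theorem~2.

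The genuine gap in your proposal is the second ingredient. You invoke a ``sharp'' uncertainty inequality
\[
W_1(f_+,f_-)\cdot\mathcal H^{d-1}\{f=0\}\gtrsim R^{\,1-1/d}\,\|f\|_{L^1},\qquad R=\|f\|_{L^1}/\|f\|_{L^\infty},
\]
but no such inequality is available. The Proposition in the paper shows only that any exponent $\alpha$ in an inequality of this form must satisfy $\alpha\ge (d-1)/d$; it is a \emph{limitation} result, not an existence result. The strongest known inequality is Theorem~1 with exponent $4-\tfrac1d$, which, as you yourself observe, would only yield $\mathcal H^{d-1}\gtrsim R^{5-1/d}\sqrt\lambda/(\log\lambda)^{d/2}$, strictly weaker than the claim. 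Your appeal to ``the refined geometric analysis of \cite{stein1}'' to close the gap is circular: that paper proves Theorem~2 directly by heat-flow and combinatorial methods, not by first establishing a sharp Wasserstein uncertainty principle. In short, the missing inequality is not a technicality---it is an open problem whose resolution would itself supersede Theorem~1.

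A secondary issue: the upper bound you sketch does not actually produce the factor $(\log\lambda)^{d/2}$. The ultracontractivity estimate $\|P_tf\|_{L^\infty}\lesssim t^{-d/4}e^{-\lambda t/2}\|f\|_{L^2}$ is irrelevant once you bound the middle term by $\mathrm{diam}(\Omega)\,\|P_tf\|_{L^1}$, since $\|P_tf\|_{L^1}\le\|P_tf\|_{L^2}\le e^{-\lambda t}\|f\|_{L^2}$ already suffices and carries no $t^{-d/4}$ factor. Your outlined argument therefore gives $W_1\lesssim(\log\lambda)^{1/2}\lambda^{-1/2}\|f\|_{L^1}$ (indeed the paper quotes $W_1\lesssim(\log\lambda)\lambda^{-1/2}\|f\|_{L^1}$ from \cite{steiner2018wass,stein2}), not the $(\log\lambda)^{d/2}\|f\|_{L^\infty}$ you state. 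The $(\log\lambda)^{d/2}$ in Theorem~2 arises from the internal machinery of \cite{stein1}, not from this decomposition.
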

The scaling has the optimal dependance, up to logarithms, on $\lambda$ (see \cite{stein2}).
The proof in \cite{stein1} makes explicit use of the heat flow and combinatorial arguments. If we assume some regularity on the nodal set, then stronger results are possible. Theorem 1  (together with another observation, see \cite[\S 9]{steiner2018wass} and \S \ref{sec:upbd_pf}) implies a  Sturm-Hurwitz type result in the case of the unit cube. 

\begin{theorem}
Let $\Omega=(0,1)^d$ and assume the setting of Theorem 2. Then 
$$ \mathcal{H}^{d-1} \left\{x:f(x) =0\right\} \gtrsim_{\Omega} \frac{  \sqrt{\lambda}}{\log{\lambda}} \left( \frac{ \|f\|_{L^1}}{\|f\|_{L^{\infty}}} \right)^{4 - \frac{1}{d}}.$$
\end{theorem}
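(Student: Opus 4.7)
The idea is to combine Theorem 1 with a spectral upper bound on $W_1(f_+, f_-)$ that exploits the high-frequency hypothesis on $f$. Rearranging Theorem 1 gives
$$\mathcal{H}^{d-1}\{x: f(x)=0\}\;\gtrsim_d\; \frac{1}{W_1(f_+, f_-)}\cdot \left(\frac{\|f\|_{L^1}}{\|f\|_{L^\infty}}\right)^{4-\frac1d}\|f\|_{L^1},$$
so the whole problem reduces to proving the ``other observation'' referenced in the statement, namely
$$W_1(f_+, f_-)\;\lesssim_\Omega\; \|f\|_{L^1}\cdot \frac{\log\lambda}{\sqrt{\lambda}}$$
for $f$ whose spectral support lies in $[\lambda,\infty)$. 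Substituting this Wasserstein bound into the previous display immediately produces the factor $\sqrt{\lambda}/\log\lambda$ claimed in Theorem 4.

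For the spectral Wasserstein bound, I would use the Neumann heat semigroup $P_t=e^{t\Delta}$ as the regularizing intermediary (this is the argument of \cite[\S 9]{steiner2018wass}). The key contraction $\|P_t f\|_{L^2}\le e^{-\lambda t}\|f\|_{L^2}$ follows directly from the spectral hypothesis. Splitting the transport cost through the triangle inequality,
$$W_1(f_+, f_-)\le W_1(f_+,P_tf_+)+W_1(P_tf_+,P_tf_-)+W_1(P_tf_-,f_-),$$
the two outer terms are controlled by the heat-displacement estimate $W_1(\mu,P_t\mu)\lesssim \sqrt{t}\,\|\mu\|_{L^1}$, reflecting the $O(\sqrt{t})$ diffusion scale of reflected Brownian motion on the cube. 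For the middle term, the identity $P_tf_+-P_tf_-=P_tf$ together with the elementary inequality $W_1(\mu,\nu)\le \mathrm{diam}(\Omega)\cdot \|\mu-\nu\|_{TV}$ reduces the problem to controlling $\|P_tf\|_{L^1}$, which by $L^1\hookrightarrow L^2$ and the spectral contraction is bounded by $e^{-\lambda t}\sqrt{\|f\|_{L^1}\|f\|_{L^\infty}}$. Choosing $t$ of order $\log\lambda/\lambda$ balances the $\sqrt{t}$ diffusion cost against the exponential damping (absorbing the ratio $\|f\|_{L^\infty}/\|f\|_{L^1}$, which is at worst polynomial in $\lambda$ for a high-frequency function, into the super-polynomial factor $e^{-\lambda t}$).

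The main subtlety I expect is verifying the heat-displacement inequality $W_1(\mu,P_t\mu)\lesssim\sqrt{t}\,\|\mu\|_{L^1}$ on the cube with reflecting boundary: one must confirm that boundary reflections do not spoil the $\sqrt{t}$ scaling, either by a probabilistic coupling argument using reflected Brownian motion or by a direct second-moment bound on the Neumann heat kernel. Once that lemma is secured, everything else — the triangle inequality, the spectral contraction, and the one-parameter optimization in $t$ — is a clean assembly, and combining the resulting upper bound on $W_1(f_+,f_-)$ with the lower bound from Theorem 1 yields the theorem.
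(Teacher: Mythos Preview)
Your proposal is correct and follows essentially the same route as the paper: the authors prove Theorem~3 by combining Theorem~1 with the spectral Wasserstein bound $W_1(f_+,f_-)\lesssim_d \tfrac{\log\lambda}{\sqrt{\lambda}}\|f\|_{L^1}$ cited from \cite{steiner2018wass, stein2}, and their sketch of that bound is exactly the heat-semigroup argument (spectral contraction plus heat-kernel-as-transport-plan) that you outline. Your triangle-inequality splitting and the lingering issue of absorbing the ratio $\|f\|_{L^\infty}/\|f\|_{L^1}$ into the $e^{-\lambda t}$ factor mirror the paper's own sketch, which likewise produces an $\|f\|_{L^\infty}^{1/2}$ term and defers the sharp $L^1$-only form to the cited references.
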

This establishes a strong connection between our measure-theoretic question (Theorem 1) and the theory of elliptic partial differential equations. 
While this result is weaker than Theorem 2, it illustrates one striking way in which Theorem~1 can be applied. Moreover, any improvement of the bound in Theorem~1 immediately implies a corresponding improvement of Theorem 3.

\section{Proof of Theorem 1}

\subsection{Outline.}
We assume that $f:[0,1]^d \rightarrow \mathbb{R}$ is continuous and has mean value
$$ \int_{\Omega}{f(x)dx} = 0 \, .$$
We introduce the two functions $f_{+}(x) = \max(f(x), 0)$ and $f_{-}(x) = -\min(f(x), 0)$ and a partition of $\Omega$ into cubes at scale $\varepsilon$, denoted by $\left\{Q_j\right\}_{j=1}^{\varepsilon ^{-d}}$.
Our main idea is to decouple the problem into many small cubes: we show that many cubes contain a non-negligible portion of $\|f_+\|_{L^1}$, the $\mu$-mass. In some of these cubes, $f$ is mostly positive, and so transporting $\mu$ from these cubes is expensive. The other cubes support both $\mu$ and $\nu$ in a non-negligible way, and so contain a large interface, i.e., a large nodal set. We will, throughout the proof, write $\|\cdot\|_{L^1(Q_j)}$ and $\|\cdot\|_{L^{\infty}(Q_j)}$ for the norms on a cube $Q_j$ and $\| \cdot\|_{L^1}$ as well as $\|\cdot \|_{L^{\infty}}$ for the global norms on~$[0,1]^d$. Since we do not expect the overall scaling to be optimal, our arguments will all be up to universal constants.
  
\subsection{Dismissing $L^1$ negligible cubes.} The first step consists of dismissing cubes on which $\|f_{+}\|_{L^1(Q_j)}$ is negligible. More precisely, we define
$$\mathcal{A} :\,= \left\{Q_j ~~{\rm s.t.}~~ \varepsilon ^{-d}\|f_+\|_{L^1(Q_j)} \leq  \frac{1}{100}\|f_+\|_{L^1(\Omega)} \right\} \, ,
$$
and $\mathcal{B} = \left\{Q_j\right\} \setminus A$ as the complement.
By definition, since the cubes are disjoint, 
  $$\|f_+\|_{L^1(\mathcal{A})} = \sum\limits_{Q\in\mathcal{A}} \|f\|_{L^1(Q)} \leq  \frac{\varepsilon ^d}{100}\|f_+\|_{L^1(\Omega)}  \cdot |\mathcal{A}|$$
and thus
\begin{align*}
\|f\|_{L^1(\Omega)}  = 2\|f_+\|_{L^1(\Omega)} & \leq\frac{\varepsilon ^d}{50}\|f_+\|_{L^1(\Omega)}  |\mathcal{A}|  +   \varepsilon ^d \|f\|_{L^{\infty}(\mathcal{B})}  |\mathcal{B}|  \\
& \leq \varepsilon ^{-d}\frac{\varepsilon ^d}{50}\|f\|_{L^1(\Omega)} +   \varepsilon ^d \|f\|_{\infty}  |\mathcal{B}| \, ,
\end{align*}
where the first equality follows from $\int f = 0$, and the last inequality follows from the trivial estimate $|\mathcal{A}|  \leq \varepsilon ^{-d}$. Hence $|\mathcal{B}|$, the number of cubes where $f_+$ is not negligible, is bounded from below
$$|\mathcal{B}|>\frac{49}{50}\varepsilon^{-d} \frac{\|f\|_{L^1}}{\|f\|_{L^{\infty}}} \, . 
$$
We will henceforth only work with cubes in $\mathcal{B}$.
\subsection{Balanced and unbalanced cubes} Let us now consider a cube $Q_j \in \mathcal{B}$, i.e. a cube for which
$$ \int_{Q_j}{f_+ dx} \geq \frac{ \varepsilon ^d}{100} \|f_+\|_{L^1(\Omega)}.$$
The previous step showed that there are many such cubes. We will divide them into two separate categories
\begin{enumerate}
\item \textit{unbalanced} cubes where the positive mass outweighs the negative mass by a large factor
$$\|f_{-}\|_{L^1(Q_j)} \leq \|f_{+}\|_{L^1(Q_j)}/100$$
\item and if a cube is not unbalanced, we say it is \textit{balanced}.
\end{enumerate}
Our strategy will be to show that {\em unbalanced} cubes induce a nontrivial amount of transport while {\em balanced} cubes, which have both positive and negative mass in nontrivial amounts, have to contain at least some nontrivial portion of the zero set $\left\{x \in \Omega: f(x) = 0\right\}$. 
\subsection{Transporting mass out unbalanced cubes}\label{sec:out_transport}
How much does it cost to transport the surplus out of an unbalanced cube? The worst case, in terms of a lower bound on the transport cost, is when the the support of $f_{-}$ (or $\nu$) in $Q_j$ is well-mixed with that of $f_+$ (or $\mu$), and can therefore absorb a lot of positive mass. In such a case, the transport distances might be arbitrarily small, and so the transport cost might be  arbitrarily small too. However, since $\|f_{-}\|_{L^1(Q_j)} \leq \|f_{+}\|_{L^1(Q_j)}/100$, the deficit $\nu$ in $Q_j$ can absorb
at most a small portion of the positive $L^1-$mass. The rest has to be transported out of the cube.

\begin{center}
\begin{figure}[h!]
\begin{tikzpicture}[scale=5]
\draw [ultra thick] (0,0) -- (1,0) -- (1,1) -- (0,1) -- (0,0);
\draw [thick, dashed] (0.2, 0.2) -- (0.8, 0.2) -- (0.8,0.8) -- (0.2, 0.8) -- (0.2, 0.2);
\node at (0.5, 0.1) {density is maximal: $\|f\|_{L^{\infty}}$};
\node at (0.5, 0.5) {density is 0};
\node at (1.1, 0) {$Q_j$};
\end{tikzpicture}
\caption{The cheapest transport occurs when the mass has to travel the smallest possible amount: this requires the densest possible configuration as close to the boundary as possible.}
\end{figure}
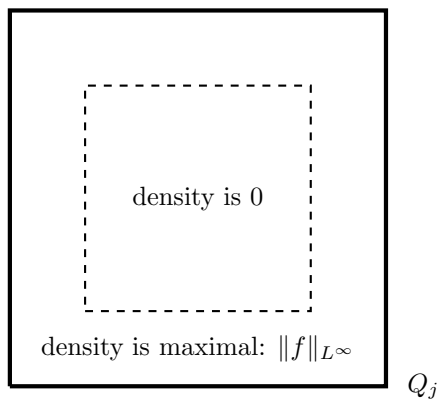
\end{center}

We now try to understand how much it costs to transport mass {\em out} of an unbalanced cube. The lower bound, i.e., the {\em cheapest} transport cost, occurs at the extreme case where the $L^1-$mass is as close as possible to the boundary of $Q_j$. Since the highest possible density of $f_+$ is $\|f\|_{L^{\infty}}$,
the amount of volume that the $L^1-$mass at maximum density occupies is at least $\|f_+\|_{L^1(Q_j)}/\|f\|_{L^{\infty}}$. The thickness $r$ of the '$\ell^{\infty}$-annulus' on the cube's surface is thus given by the solution of the equation
$$ \varepsilon^{d} - (\varepsilon -  2r)^{d} = \frac{ \|f_+\|_{L^1(Q_j)}}{\|f\|_{L^{\infty}}}.$$
The mean-value theorem implies, for some $\varepsilon - r < \xi < \varepsilon$,
$$ \varepsilon^{d} - (\varepsilon -  2 r)^{d} =  2d r \xi^{d-1} \leq d r \varepsilon^{d-1}$$
and thus, for a single unbalanced cube,
$$ r \geq  \frac{1}{2d}\frac{1}{\varepsilon^{d-1}} \frac{ \|f_+\|_{L^1(Q_j)}}{\|f\|_{L^{\infty}}} \, .$$
The total mass to be transported is $\gtrsim \|f_+\|_{L^1(Q_j)}$ and therefore the transport cost out of a single unbalanced cube is
$$ W_1-\mbox{cost} \gtrsim \frac{1}{\varepsilon^{d-1}}\frac{\|f_+\|_{L^1(Q_j)}}{\|f\|_{L^{\infty}}} \|f_+\|_{L^1(Q_j)}  \gtrsim \varepsilon^{d+1} \frac{\|f\|^2_{L^1}}{\|f\|_{L^{\infty}}}\, .$$
where the last inequality is due to $Q_j \in \mathcal{B}$ implying $\|f_+\|_{L^1(Q_j)} \gtrsim \varepsilon^d \|f\|_{L^1(\Omega)}$.

\subsection{Isoperimetry in balanced cubes.} Suppose now that $Q_j \in \mathcal{B}$ is balanced. That means that there is nontrivial amount of $\mu$ mass (at least $\sim \varepsilon^d \|f\|_{L^1(\Omega)}$, since~$Q_j \in \mathcal{B}$). However, there might be comparable amounts of $\nu$ mass (i.e., $\|f_{-}\|_{L^1(Q_j)}$) and it is not obvious how to obtain a nontrivial bound on transportation from $\mu$ in $Q_j$. Instead, in balanced sets, we will estimate the size of the nodal set directly. Indeed, since $\|f_{\pm}\|_{L^1(Q_j)}$ are not too small, their interface, the boundary of the set $\mbox{supp}(f_+)\cap Q_j$ cannot be too small either.  Note that some of this boundary might be in $\partial Q_j$ and is not part of the nodal set. To estimate the size of the nodal set alone, we will use the Relative Isoperimetric Inequality, see Lions \& Pacella \cite{lions}, Morgan \cite{morgan}, Ritore \& Vernadakis \cite{rit} and references therein. 
\begin{theorem*}[Relative Isoperimetric Inequality]
Let $K\subseteq [0,1]^d$. Then $$\mathcal{H}^{d-1}(\partial K \cap (0,1)^d) \gtrsim _d \min \left({\rm vol}(K), {\rm vol}([0,1]^d\setminus K) \right)^{\frac{d-1}{d}} \, .$$
\end{theorem*}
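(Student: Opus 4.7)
The plan is to reduce the relative isoperimetric inequality to the BV version of the Sobolev--Poincar\'e inequality on the cube, namely, for every $u \in BV(\Omega)$ with $\Omega=(0,1)^d$,
\[
\| u - u_\Omega\|_{L^{d/(d-1)}(\Omega)} \;\lesssim_d\; \|Du\|(\Omega),
\]
where $u_\Omega = |\Omega|^{-1}\int_\Omega u$ and $\|Du\|(\Omega)$ is the total variation. Applied to the indicator $u = \chi_K$ (approximating $K$ by sets with smooth boundary if needed), the right-hand side is exactly the relative perimeter, which for regular $K$ agrees with $\mathcal{H}^{d-1}(\partial K \cap (0,1)^d)$.

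With $u = \chi_K$ and $u_\Omega = |K|$, and setting $m := \min(|K|, 1-|K|)$, one computes
\[
\|\chi_K - |K|\|_{L^{d/(d-1)}}^{d/(d-1)} \;=\; (1-|K|)^{d/(d-1)}|K| \,+\, |K|^{d/(d-1)}(1-|K|).
\]
A short case analysis (whether $|K|\leq 1/2$ or $|K|\geq 1/2$) bounds this below by $c_d\, m$: in each case one of the two factors $|K|$, $1-|K|$ is at least $1/2$, so its $d/(d-1)$-th power is at least $(1/2)^{d/(d-1)}$, multiplied by the other factor $m$. Raising to the power $(d-1)/d$ yields $\|\chi_K-|K|\|_{L^{d/(d-1)}} \gtrsim_d m^{(d-1)/d}$, which combined with the Sobolev--Poincar\'e bound is exactly the claim.

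The main (and only nontrivial) obstacle is establishing the BV Sobolev--Poincar\'e inequality on the cube. One route is by reflection: extend $u$ across each face of $[0,1]^d$ to obtain a function on $[-1,2]^d$ whose total variation is controlled by that of $u$ (reflection contributes no jump on the shared face), then multiply by a smooth cutoff supported in a slightly larger cube and apply the classical Gagliardo--Nirenberg--Sobolev inequality $\|v\|_{L^{d/(d-1)}(\mathbb{R}^d)} \lesssim_d \|Dv\|(\mathbb{R}^d)$. An alternative is to approximate $u$ by smooth functions (mollification), prove the inequality in $W^{1,1}(\Omega)$ by iterating the one-dimensional Poincar\'e inequality along the coordinate axes slice-by-slice (Fubini), and then pass to the BV limit using lower semicontinuity of the total variation. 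Both routes are standard; since the inequality is classical (and is the content of the references cited in the statement), our role here is merely to reduce the relative isoperimetric inequality to this known functional inequality and carry out the short computation above.
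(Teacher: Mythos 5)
Your proposal is correct, but it is worth noting that the paper does not prove this statement at all: it is quoted as a known theorem with references to Lions--Pacella, Morgan, and Ritor\'e--Vernadakis, so there is no ``paper proof'' to match. What you give is the standard textbook derivation (it appears essentially verbatim in Ambrosio--Fusco--Pallara, which is in the paper's bibliography): reduce to the $BV$ Sobolev--Poincar\'e inequality $\|u-u_\Omega\|_{L^{d/(d-1)}(\Omega)}\lesssim_d \|Du\|(\Omega)$ on the cube, plug in $u=\chi_K$, and do the elementary two-case computation, which you carry out correctly (the constant comes out as $\tfrac12 m^{(d-1)/d}$ after taking the $(d-1)/d$ power). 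Two small points. First, the cleanest way to pass from the perimeter to the topological boundary is not ``approximation by smooth sets'' (which only controls the perimeter, not $\partial K$) but the inclusion $\partial^* K\subseteq\partial K$ together with Federer's criterion: either $\mathcal{H}^{d-1}(\partial K\cap(0,1)^d)=\infty$ and there is nothing to prove, or it is finite, in which case $K$ has finite relative perimeter and $P(K;(0,1)^d)=\mathcal{H}^{d-1}(\partial^*K\cap(0,1)^d)\le\mathcal{H}^{d-1}(\partial K\cap(0,1)^d)$; since you only need the perimeter as a lower bound for the topological boundary, the direction of this inequality is exactly the one you want. (In the paper's application $K$ is the open set $\{f>0\}$ inside a subcube, so this is harmless there too.) Second, your reflection-plus-cutoff route for the Poincar\'e inequality needs the $L^1$ Poincar\'e inequality to absorb the $\|\nabla\eta\|_\infty\|u-u_\Omega\|_{L^1}$ term from the cutoff, so it is not quite a one-step reduction to the Gagliardo--Nirenberg--Sobolev inequality; this is standard but should be said. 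Overall: a correct, self-contained argument for a statement the paper uses as a black box.
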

We now apply the relative isoperimetric inequality to the set
$$ K = \left\{x \in Q_j: f(x) > 0\right\}.$$
We observe that, since $Q_j$ is balanced and $Q_j \in \mathcal{B}$,
$$   \mbox{vol}(K) \gtrsim \frac{\|f_+\|_{L^1(Q_j)}}{\|f\|_{L^{\infty}}} \quad \mbox{as well as} \quad \mbox{vol}(Q_j \setminus K) \gtrsim \frac{\|f_-\|_{L^1(Q_j)}}{\|f\|_{L^{\infty}}} \gtrsim \frac{\|f_+\|_{L^1(Q_j)}}{\|f\|_{L^{\infty}}}\, .$$
Thus, using again $\|f_+\|_{L^1(Q_j)} \gtrsim \varepsilon^d \|f\|_{L^1(\Omega)}$,
$$ \mathcal{H}^{d-1}\left( \left\{x \in Q_j: f(x) = 0\right\}\right) \gtrsim \left(  \frac{\|f_+\|_{L^1(Q_j)}}{\|f\|_{L^{\infty}}} \right)^{\frac{d-1}{d}} \gtrsim \varepsilon^{d-1} \left(  \frac{\|f\|_{L^1(\Omega)}}{\|f\|_{L^{\infty}(\Omega)}} \right)^{\frac{d-1}{d}}$$

\subsection{Balancing the scales.} Let us denote the number of balanced cubes by $E$ and the number of unbalanced cubes by $F$. We know that 
$$E+F = |\mathcal{B}|\, .$$
The final ingredient is to show that there is a scale $\varepsilon$ at which most cubes are unbalanced. By adding our estimate over all balanced cubes, we obtain
$$ \mathcal{H}^{d-1}\left( \left\{x \in (0,1)^d: f(x) = 0\right\}\right) \gtrsim \varepsilon^{d-1} \left(  \frac{\|f\|_{L^1(\Omega)}}{\|f\|_{L^{\infty}(\Omega)}}\right)^{\frac{d-1}{d}} E \, .$$
This can be rewritten as
$$ E \lesssim \frac{\mathcal{H}^{d-1}\left( \left\{x \in (0,1)^d: f(x) = 0\right\}\right)}{ \varepsilon^{d-1} \left(  \frac{\|f\|_{L^1(\Omega)}}{\|f\|_{L^{\infty}(\Omega)}}\right)^{\frac{d-1}{d}}}\,.$$
We want to pick $\varepsilon$ such that $E$ is smaller than $|\mathcal{B}|/2$, which then yields $F \sim |\mathcal{B}|$. We recall our lower bound on $|\mathcal{B}|$
$$ |\mathcal{B}| \gtrsim \varepsilon^{-d} \frac{\|f\|_{L^1}}{\|f\|_{L^{\infty}}}  = G \, ,$$
and will therefore pick $\varepsilon$ so as to ensure $E \leq G/2$. This motivates the scale 
$$ \varepsilon \sim
 \left(  \frac{\|f\|_{L^1(\Omega)}}{\|f\|_{L^{\infty}(\Omega)}}\right)^{2-\frac{1}{d}} \frac{1}{ \mathcal{H}^{d-1}\left( \left\{x: f(x) = 0\right\}\right)}. \qquad (\diamond)$$
We show that this value is admissible, i.e., $0 \leq \varepsilon \leq 1$: the relative isoperimetric inequality implies that
$$  \mathcal{H}^{d-1}\left\{x \in (0,1)^d: f(x) = 0 \right\} \gtrsim  \left( \frac{\|f\|_{L^1}}{\|f\|_{L^{\infty}}} \right)^{\frac{d-1}{d}} \, ,$$
and therefore
$$ \varepsilon \sim  \left(  \frac{\|f\|_{L^1(\Omega)}}{\|f\|_{L^{\infty}(\Omega)}}\right)^{2-\frac{1}{d}} \frac{1}{\mathcal{H}^{d-1}\left( \left\{x: f(x) = 0\right\}\right) }\lesssim   \frac{\|f\|_{L^1(\Omega)}}{\|f\|_{L^{\infty}(\Omega)}} \lesssim 1 \, .$$
For $\varepsilon$ sufficiently small (in particular, smaller than $(\diamond)$ suffices), we have $F \sim |\mathcal{B}|$. In \S \ref{sec:out_transport} we obtained a lower bound on the $W_1$ cost of transporting $\mu$ out of a single unbalanced cube. Since this is a lower bound, we can in this case sum it over {\em all $F$ unbalanced cubes} and estimate 
\begin{align*}
 W_1(f_+, f_-) &\gtrsim  \varepsilon^{d+1} \frac{\|f\|^2_{L^1}}{\|f\|_{L^{\infty}}} F \gtrsim  \varepsilon^{d+1} \frac{\|f\|^2_{L^1}}{\|f\|_{L^{\infty}}} |\mathcal{B}| \\
&\gtrsim \varepsilon^{d+1} \frac{\|f\|^2_{L^1}}{\|f\|_{L^{\infty}}} \varepsilon^{-d} \frac{\|f\|_{L^1}}{\|f\|_{L^{\infty}}} \gtrsim \varepsilon \frac{\|f\|^3_{L^1}}{\|f\|^2_{L^{\infty}}} \, .
\end{align*}
Having chosen $\varepsilon$ as in $(\diamond)$, this results in
$$ W_1(f_+, f_-) \cdot \mathcal{H}^{d-1}\left\{x \in (0,1)^d: f(x) = 0 \right\} \gtrsim_{d}  \left( \frac{\|f\|_{L^1}}{\|f\|_{L^{\infty}}} \right)^{4 - \frac1d} \|f\|_{L^1}  \, ,$$
which concludes the argument.

\subsection{The case of $W_p-$distance.}\label{sec:wp} Our argument can be easily adapted to deal with the more general case of $W_p$ distances. 

\begin{theorem} Under the same assumptions as in Theorem 1, we have, for all $1~\leq~p < \infty$,
$$ W_p(f_+, f_-) \cdot \mathcal{H}^{d-1}\left\{x \in (0,1)^d: f(x) = 0 \right\} \gtrsim_{d, p}  \left( \frac{\|f\|_{L^1}}{\|f\|_{L^{\infty}}} \right)^{3 - \frac1d + \frac1p} \|f\|_{L^1} \, ,$$
\end{theorem}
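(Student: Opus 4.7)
The proof will follow the argument for Theorem~1 almost word for word, with the only modification occurring in the estimate of the transport cost out of an unbalanced cube (\S 2.4). The partition of $[0,1]^d$ into $\varepsilon$-cubes, the dismissal of $L^1$-negligible cubes yielding $|\mathcal{B}| \gtrsim \varepsilon^{-d}\|f\|_{L^1}/\|f\|_{L^\infty}$, the balanced/unbalanced dichotomy, the relative isoperimetric estimate for the nodal contribution of balanced cubes, and the final balance of scales $(\diamond)$ are all statements about $L^1$-mass and Hausdorff measure and are independent of the cost exponent $p$; they can be imported verbatim.

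The only step that needs modification is the lower bound on the $W_p$-cost of transporting mass out of an unbalanced cube $Q_j$. The extremal configuration is the same as in the $W_1$ case: the surplus is packed at maximum density $\|f\|_{L^\infty}$ in an annulus of thickness $r_j \gtrsim m_j /(\|f\|_{L^\infty}\varepsilon^{d-1})$, where $m_j = \|f_+\|_{L^1(Q_j)}$. What changes is the cost integral: mass at distance $s$ from $\partial Q_j$ must travel at least $s$ to leave the cube, which for $W_p^p$ contributes $s^p$ per unit mass. This gives
\[
W_p^p\text{-cost out of } Q_j \;\gtrsim\; \int_0^{r_j}\! s^p\, \|f\|_{L^\infty} \varepsilon^{d-1}\, ds \;\gtrsim_p\; \frac{m_j^{p+1}}{\|f\|_{L^\infty}^{p}\, \varepsilon^{p(d-1)}}.
\]
These per-cube bounds add (in the $p$-th power) across disjoint unbalanced cubes: any coupling $\gamma$ of $f_+$ and $f_-$ satisfies $\int |x-y|^p d\gamma \geq \sum_{j} \int_{Q_j \times Q_j^c} \operatorname{dist}(x,\partial Q_j)^p\, d\gamma(x,y)$, and on each unbalanced $Q_j$ a fixed fraction of $m_j$ must be transported to $Q_j^c$.

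Summing over the $F \gtrsim |\mathcal{B}|$ unbalanced cubes, and invoking $m_j \gtrsim \varepsilon^d \|f\|_{L^1}$ together with $|\mathcal{B}| \gtrsim \varepsilon^{-d}\|f\|_{L^1}/\|f\|_{L^\infty}$, one obtains
\[
W_p(f_+, f_-)^p \;\gtrsim_p\; \varepsilon^p \cdot \frac{\|f\|_{L^1}^{p+2}}{\|f\|_{L^\infty}^{p+1}},
\]
which reduces to the bound $W_1 \gtrsim \varepsilon \|f\|_{L^1}^3/\|f\|_{L^\infty}^2$ from Theorem~1 in the case $p=1$. Taking the $p$-th root and inserting the scale $\varepsilon$ prescribed by $(\diamond)$ produces a bound whose exponent on the ratio $\|f\|_{L^1}/\|f\|_{L^\infty}$ is $(2 - 1/d) + (p+1)/p = 3 - 1/d + 1/p$, which is precisely the claimed estimate.

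I do not anticipate any genuinely new analytic obstacle: the argument is an exercise in tracking exponents, and the choice of $\varepsilon$ that was optimal for $W_1$ is also optimal for all $W_p$, since it depends only on the balanced/unbalanced accounting. The one point that deserves care is the additivity step: one is summing $W_p^p$-costs rather than $W_p$-costs across disjoint unbalanced cubes, and this requires extracting from any competing coupling $\gamma$ the disjoint sub-couplings associated with each cube $Q_j$ and the portion of mass that must leave $Q_j$.
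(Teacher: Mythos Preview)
Your proposal is correct and follows essentially the same route as the paper: the only change to the Theorem~1 argument is the per-cube transport bound, where you replace the $W_1$-cost estimate $r_j\cdot m_j$ by the layered integral $\int_0^{r_j} s^p\|f\|_{L^\infty}\varepsilon^{d-1}\,ds \sim r_j^p m_j$, then sum $W_p^p$-costs over the unbalanced cubes and insert the same scale $(\diamond)$. Your explicit remark about additivity of the $p$-th power costs across disjoint cubes is a point the paper leaves implicit.
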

\begin{proof}
The proof is more or less identical, we note the arising changes here. The first change is in the cost of the transporting superfluous mass out of an unbalanced cube. The same geometric argument now implies
$$ \left( W^p-\mbox{cost} \right)^p \gtrsim_p \left(\frac{1}{\varepsilon^{d-1}}\frac{\|f_+\|_{L^1(Q_j)}}{\|f\|_{L^{\infty}}}\right)^p \|f_+\|_{L^1(Q_j)}\, .$$
The assumption $Q_j \in \mathcal{B}$, implying $\|f_+\|_{L^1(Q_j)} \gtrsim \varepsilon^d \|f\|_{L^1(\Omega)}$ then results in
$$ \left( W^p-\mbox{cost} \right)^p \gtrsim_p \varepsilon^{p+d} \left( \frac{\|f\|_{L^1}}{\|f\|_{L^{\infty}}}\right)^p \|f\|_{L^1} \, .$$
The geometric argument regarding the size of the nodal set is unchanged. We thus obtain
\begin{align*}
 W_p(f_+, f_-)^p \gtrsim \varepsilon^{p} \left( \frac{\|f\|_{L^1}}{\|f\|_{L^{\infty}}}\right)^{p+1} \|f\|_{L^1} \, . 
 \end{align*}
 Plugging in the same value of $\varepsilon$ and taking a $p-$th root results in
 $$  W_p(f_+, f_-) \mathcal{H}^{d-1}\left\{x \in (0,1)^d: f(x) = 0 \right\}  \gtrsim   \left( \frac{\|f\|_{L^1}}{\|f\|_{L^{\infty}}} \right)^{3 - \frac1d + \frac1p} \|f\|_{L^1}.$$
\end{proof}

\section{Proof of the Proposition}
 We wish to show that the $4-1/d$ exponent in Theorem 1 cannot be replaced by an exponent smaller than $1-1/d$. We do so by an explicit counterexample: loosely speaking, we take $n$ points that are as far apart from one another as $n$ points can possibly be, put small bump functions around these points and then compute all relevant properties.

\begin{proof}[Proof.] We can assume w.l.o.g.\ that $|\Omega|=1$. We also assume that there exists a set of points $\left\{x_1, \dots, x_n \right\}$ that are $\gtrsim n^{-1/d}$ separated from each other (this obviously holds for $(0,1)^d$). Let $0 < \varepsilon \ll n^{-1/d}$ and consider the function
$$ f(x) = - c_n + \sum_{k=1}^{n}{ \frac{\varepsilon^{-d}}{n} \chi_{|x-x_k| \leq \varepsilon}}\, ,$$
where $c_n > 0$ is chosen so that  $\int _{\Omega} f=0$.  Since $c_n \sim 1$, we see that
$$ \|f\|_{L^1} \sim 1 \qquad \mbox{and} \qquad \|f\|_{L^{\infty}} \sim \varepsilon^{- d} n^{-1}.$$
Moreover, the zero set is the union of $n$ disjoint spheres, so
$$ \mathcal{H}^{d-1}\left\{x \in \Omega: f(x) = 0\right\} \sim_n n \varepsilon^{d-1}.$$
The transport cost is also easily estimated since most mass has to be transported at least distance $\gtrsim n^{-1/d}$, the minimal spacing between the points. Thus
$$ W_1(f_+, f_-) \cdot \mathcal{H}^{d-1}\left\{x \in (0,1)^d: f(x) = 0 \right\} \sim \|f\|_{L^1} n^{-1/d} n \varepsilon^{d-1} \sim n^{\frac{d-1}{d}} \varepsilon^{d-1} \, ,$$
while
$$ \frac{\|f\|_{L^1}}{\|f\|_{L^{\infty}}} \sim \frac{1}{\|f\|_{L^{\infty}}} \sim  n \cdot \varepsilon^{d} \, .$$
Comparing the two equations above establishes the desired result.
\end{proof}

\section{Proof Theorem 3}\label{sec:upbd_pf}

\begin{proof} This proof is a straightforward consequence of an inequality that was proved in \cite{steiner2018wass, stein2}: for functions
$f:[0,1]^d \rightarrow \mathbb{R}$ satisfying the assumptions of Theorem 3, we have the estimate
$$ W_1( f_+, f_-) \lesssim_{d} \frac{\log{\lambda}}{\sqrt{\lambda}} \|f\|_{L^1}.$$
If we combine this with our Theorem 1, the desired consequence follows immediately. For sake of completeness, we quickly sketch the main idea behind this upper bound (and refer to \cite{steiner2018wass, stein2} for details). Orthogonality to eigenfunctions up to eigenvalue $\lambda$ implies, by the spectral theorem, rapid decay of the (Neumann-)heat evolution
$$ \left\| e^{t\Delta}f\right\|_{L^1} \leq  \left\| e^{t\Delta}f\right\|_{L^2} \leq e^{-\lambda t} \|f\|_{L^2} \leq e^{-\lambda t}  \left\| f\right\|^{1/2}_{L^1} \|f\|_{L^{\infty}}^{1/2}.$$
At the same time, the heat kernel representation of the evolution of the Neumann heat equation 
$$ \left(e^{t\Delta}f\right)(x) = \int_{\Omega}{p_t(x,y) f(y) dy}$$
coupled with the classical decay estimates on the heat equation imply that the heat equation may be understood as one particular transport plan whose cost we can control. The result is sharp up to the logarithm.
\end{proof}

\section{ Further Remarks}
\subsection{The Discrete Case.} We believe that it might be of interest to study
the discrete setting as well, especially in light of possible applications such as network transportation problems. We now describe one 
way of phrasing the question. \\

Let $G=(V,E)$ be a finite, connected, undirected graph and let $f:V \rightarrow \mathbb{R}$ be a function with mean value 0. We can define the Wasserstein distance $W_1(f_+, f_-)$ as before. Indeed, for practical applications one would presumably consider weighted and directed edges in which case $W_1$ can be defined just as well (indeed, the definition is somewhat easier than in the continuous setting because everything is discrete). We define the boundary of a subset $A \subset V$ as the number of edges going from an element in the set to an element in the complement
$$ |\partial A| = \left\{ (u,v) \in E: u \in A ~\mbox{and}~ v \in V\setminus A \right\}.$$
\begin{quote}
\textbf{Basic principle.} The transport cost $W_1(f_+, f_-)$ and the boundary $\partial \left\{v \in V: f(v) > 0\right\}$ cannot both be simultaneously small.
\end{quote}
The precise form of the basic principle will, of course, depend on the actual underlying graph and we believe it would be quite interesting to get a better understanding of this idea. 
\begin{center}
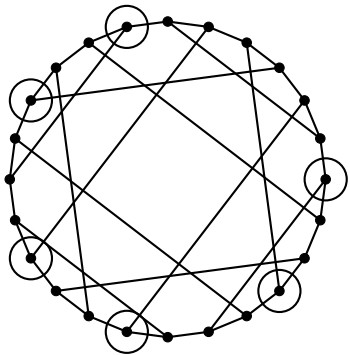
\begin{figure}[h!]
  \begin{tikzpicture}[scale=0.7]
\foreach \a in {1,2,...,24}{
\filldraw (\a*360/24: 3cm) circle (0.09cm);
};
\foreach \a in {1,2,...,24}{
\draw [thick] (\a*360/24: 3cm) --  (\a*360/24 + 360/24: 3cm);
};
\draw [thick] (1*360/24: 3cm) -- (6*360/24: 3cm);
\draw [thick] (2*360/24: 3cm) -- (17*360/24: 3cm);
\draw [thick] (3*360/24: 3cm) -- (10*360/24: 3cm);
\draw [thick] (4*360/24: 3cm) -- (21*360/24: 3cm);
\draw [thick] (5*360/24: 3cm) -- (14*360/24: 3cm);
\draw [thick] (7*360/24: 3cm) -- (12*360/24: 3cm);
\draw [thick] (8*360/24: 3cm) -- (23*360/24: 3cm);
\draw [thick] (9*360/24: 3cm) -- (16*360/24: 3cm);
\draw [thick] (11*360/24: 3cm) -- (20*360/24: 3cm);
\draw [thick] (13*360/24: 3cm) -- (18*360/24: 3cm);
\draw [thick] (15*360/24: 3cm) -- (22*360/24: 3cm);
\draw [thick] (19*360/24: 3cm) -- (24*360/24: 3cm);
\draw [thick] (7*360/24: 3cm) circle (0.4cm);
\draw [thick] (10*360/24: 3cm) circle (0.4cm);
\draw[thick] (14*360/24: 3cm) circle (0.4cm);
\draw [thick] (17*360/24: 3cm) circle (0.4cm);
\draw[thick] (21*360/24: 3cm) circle (0.4cm);
\draw [thick] (24*360/24: 3cm) circle (0.4cm);
   \end{tikzpicture}
\caption{The Nauru Graph on 24 vertices: a subset of 6 vertices such that its characteristic function is orthogonal to the first 19 nontrivial Laplacian eigenfunctions. This subset is very well spread throughout the Graph: is it extremal for the Wasserstein uncertainty principle described below?}
\end{figure}
\end{center}

\subsection{Motivating a Spectral Proof} However, we do have a candidate for extremal sets. The second author has shown \cite{steiner2018wass} that if $f$ is orthogonal to the first few eigenfunctions of the Laplacian, then the heat flow becomes a reasonable transport plan (indeed, at most logarithmic factors away from optimal). Graphical designs \cite{graphdes}, on the other hand, are sets of vertices such that the associated characteristic function is indeed orthogonal to the first few Laplacian eigenfunctions. This seeming relation between the two notions motivates a natural question that we can phrase in a variety of ways
\begin{enumerate}
\item If an optimal way to design locations of supply and demand (e.g., determine where to build the supermarkets) minimizes the Wasserstein distance to the uniform distribution, are graphical designs optimal or near-optimal?
\item Are graphical designs natural candidates for extremal sets for uncertainty principles of the form
$$W_1(f_+, f_-)  \cdot \left|\partial \left\{v \in V: f(v) > 0\right\}\right| \gtrsim_G 1?$$
We emphasize that we do not know what a suitable algebraic form of such an uncertainty principle could be.
\end{enumerate}

\begin{center}
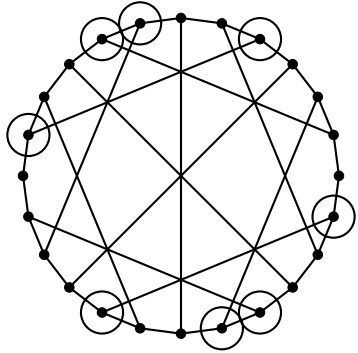
\begin{figure}[h!]
  \begin{tikzpicture}[scale=0.7]
\foreach \a in {1,2,...,24}{
\filldraw (\a*360/24: 3cm) circle (0.09cm);
};
\foreach \a in {1,2,...,24}{
\draw [thick] (\a*360/24: 3cm) --  (\a*360/24 + 360/24: 3cm);
};
\draw [thick] (1*360/24: 3cm) -- (8*360/24: 3cm);
\draw [thick] (2*360/24: 3cm) -- (19*360/24: 3cm);
\draw [thick] (3*360/24: 3cm) -- (15*360/24: 3cm);
\draw [thick] (4*360/24: 3cm) -- (11*360/24: 3cm);
\draw [thick] (5*360/24: 3cm) -- (22*360/24: 3cm);
\draw [thick] (6*360/24: 3cm) -- (18*360/24: 3cm);
\draw [thick] (7*360/24: 3cm) -- (14*360/24: 3cm);
\draw [thick] (9*360/24: 3cm) -- (21*360/24: 3cm);
\draw [thick] (10*360/24: 3cm) -- (17*360/24: 3cm);
\draw [thick] (13*360/24: 3cm) -- (20*360/24: 3cm);
\draw [thick] (16*360/24: 3cm) -- (23*360/24: 3cm);
\draw [thick] (4*360/24: 3cm) circle (0.4cm);
\draw [thick] (7*360/24: 3cm) circle (0.4cm);
\draw [thick] (8*360/24: 3cm) circle (0.4cm);
\draw[thick] (11*360/24: 3cm) circle (0.4cm);
\draw [thick] (16*360/24: 3cm) circle (0.4cm);
\draw[thick] (19*360/24: 3cm) circle (0.4cm);
\draw[thick] (20*360/24: 3cm) circle (0.4cm);
\draw [thick] (23*360/24: 3cm) circle (0.4cm);
   \end{tikzpicture}
\caption{The McGee Graph on 24 vertices: a subset of 8 vertices is orthogonal to the first 21 eigenfunctions exactly. Every other vertex is exactly distance 1 away from exactly one element of this subset. Is it an extremal set?}
\end{figure}
\end{center}

While these questions seem out of reach on general graphs, we can study them in the continuous setting: are point sets that are orthogonal to many Laplacian eigenfunctions candidates for extremal sets? In the continuous setting, this is indeed the case, and a purely spectral proof of the Proposition is possible. Similar arguments are then possible on rather general graphs: the missing ingredient is a graph counterpart of the celebrated result of Bondarenko, Radchenko and Viazovska~\cite{bond, bond2}~establishing the existence of such points (or designs) on $\mathbb{S}^d$. The analogous problem in the case of graphs is wide open \cite{graphdes}, for Riemannian manifolds we refer to~\cite{bilyk, gariboldi}.

\subsection{A Spectral Proof of the Proposition.}
\begin{proof}[A Spectral Proof.]  The example is adapted from \cite {stein2}. The constant $c_d$ will denote a universal constant depending only on the dimension and will change its value from line to line. We fix $\mathbb{S}^{d}$ and $n$ and pick $\left\{x_1, \dots, x_n\right\} \subset \mathbb{S}^{d}$ in such a way that $\|x_i - x_j\| \geq c_d n^{-1/d}$ and
$$ \frac{1}{n} \sum_{k=1}^{n}{p(x_k)} = \frac{1}{\mathcal{H}^{d}(\mathbb{S}^{d})} \int_{\mathbb{S}^{d-1}}{ p(x) d\mathcal{H}^{d}}$$
for all polynomials $p:\mathbb{R}^{d} \rightarrow \mathbb{R}$ of degree less than $c_d n^{1/d}$. The existence of these points follows from \cite{bond, bond2}. We will now introduce the (signed) measure
$$ \mu = -\frac{1}{\mathcal{H}^{d}(\mathbb{S}^{d})} + \frac{1}{n} \sum_{k=1}^{n}{\delta_{x_k}}$$
and will consider the evolution of the heat equation for short time $t$. We note that~$e^{t\Delta}\mu$ is a smooth function for all $t>0$. Moreover, using the standard asymptotic behavior of the heat kernel together with the condition $\|x_i - x_j\| \geq c_d n^{-1/d}$, we see that, for $t^{d/2} \ll n^{-1}$
$$ (e^{t\Delta} \mu) (x) > 0 \qquad \mbox{as long as} \qquad \min_{1 \leq k \leq n} \| x-x_k\| \leq c_d\sqrt{t}.$$
That allows us to estimate
$$ \mathcal{H}^{d-1} \left\{ x \in \mathbb{S}^{d-1}: (e^{t\Delta} \mu) (x) > 0\right\} \sim t^{\frac{d-1}{2}} n.$$
Moreover, we obtain, for $t \lesssim n^{-2/d}$,
$$ \|f\|_{L^{\infty}} \sim \frac{1}{n t^{d/2}}$$
and 
$$ \|f\|_{L^1} \sim 1.$$ 
Using the inequality (see \cite{steiner2018wass})
$$ W_1(f_+, f_-) \lesssim_{\Omega} e^{- n^{2/d} t} \|f\|_{L^1},$$
we obtain
$$ W_1(e^{t\Delta} \mu_+, e^{t\Delta} \mu_-) \lesssim  \frac{\sqrt{\log{n^{2/d}}}}{\sqrt{n^{2/d}}} \lesssim_d \frac{\log{n}}{n^{1/d}}.$$
This shows
$$ \mathcal{H}^{d-1} \left\{ x \in \mathbb{S}^{d-1}: (e^{t\Delta} \mu) (x) > 0\right\} \gtrsim n^{1/d} (nt^{d/2})^{\alpha}$$
which naturally suggests $\alpha = (d-1)/d$ as a candidate for the endpoint.
\end{proof}

\subsection{ Domains without uncertainty principles.} 
We illustrate that the validity of an uncertainty principle as in Theorem 1 requires at least some form of the isoperimetric inequality to be valid. More precisely, let $\Omega \subset \mathbb{R}^2$ be bounded. Then, using the trivial estimate
$$ W_1(f+, f_-) \leq \mbox{diam}(\Omega) \|f\|_{L^1}$$
\begin{center}
\begin{figure}[h!]
\begin{tikzpicture}
   \draw [thick,domain=30:330] plot ({cos(\x)}, {sin(\x)});
      \draw [thick,domain=10:135] plot ({1.3+0.65*cos(\x)}, {0.65*sin(\x)});
      \draw [thick,domain=180+45:350] plot ({1.3+0.65*cos(\x)}, {0.65*sin(\x)});
      \draw [thick,domain=10:165] plot ({2.35+0.45*cos(\x)}, {0.45*sin(\x)});
      \draw [thick,domain=195:350] plot ({2.35+0.45*cos(\x)}, {0.45*sin(\x)});
      \node at (3,0) {$\dots$};
\end{tikzpicture}
\caption{Balls glued together along shrinking interfaces. No nontrivial counterpart for Theorem 1 holds on this domain. }
\end{figure}
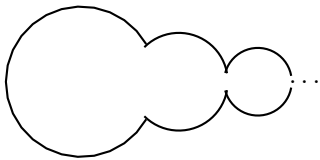
\end{center}
and considering a difference of two
characteristic functions $f=\chi_A-\chi_B$ for two disjoint and equal-sized $A, B \subset \Omega$, we see that any estimate of the type
$$ W_1(f_+, f_-) \cdot \mathcal{H}^{d-1}\left\{x \in (0,1)^d: f(x) = 0 \right\} \gtrsim_{d}  \left( \frac{\|f\|_{L^1}}{\|f\|_{L^{\infty}}} \right)^{\alpha} \|f\|_{L^1} \, .$$
needs, in particular, to also imply
$$ \mathcal{H}^{d-1}(\Omega \cap \partial A) \gtrsim |A|^{\alpha},$$
which is a relative isoperimetric inequality whose validity certainly depends on $\Omega$. Fig. 5 is a simple example of a domain for which either no such estimate is possible or may require arbitrarily large values of $\alpha$ (depending on the speed with which the interfaces decay).

\end{document}